\documentclass[reqno, 12pt]{amsart}

\pdfoutput=1

\usepackage{enumerate}
\usepackage{latexsym}
\usepackage[centertags]{amsmath}
\usepackage{amsfonts}
\usepackage{amssymb}
\usepackage{amsthm}
\usepackage{newlfont}
\usepackage{graphics}
\usepackage{color}
\usepackage{float}
\usepackage{diagbox}
\textwidth 480pt \hoffset -60pt \textheight 9in \voffset -30pt
\parindent 8mm
\parskip 2mm

\usepackage{hyperref}
\usepackage{longtable}
\usepackage{rotating}
\usepackage{multirow}

\usepackage{extarrows}

\usepackage[sort,compress,numbers]{natbib}

\usepackage[utf8]{inputenc}

\newtheorem{thm}{Theorem}[section]
\newtheorem{cor}[thm]{Corollary}
\newtheorem{lem}[thm]{Lemma}
\newtheorem{prop}[thm]{Proposition}

\theoremstyle{mydefinition}
\newtheorem{dfn}[thm]{Definition}
\theoremstyle{myremark}
\newtheorem{rem}[thm]{Remark}

\allowdisplaybreaks[4]

\def\Z{\mathbb{Z}}

\newcommand{\Q}{{\mathbb{Q}}}

\def\CT{\mathop{\mathrm{CT}}}

\title{Generating functions for the quotients of numerical semigroups}

\author[Feihu Liu]{Feihu Liu$^{1}$}

\address{$^{1}$School of Mathematical Sciences, Capital Normal University,
 Beijing 100048,  PR China}
\email{$^1$\texttt{liufeihu7476@163.com}}
\date{December 18, 2023}
\begin{document}
\maketitle

\begin{abstract}
We propose a class of generating functions denoted by $\textrm{RGF}_p(x)$, which is related to the Sylvester denumerant for the quotients of numerical semigroups. Using MacMahon's partition analysis, we can obtain $\textrm{RGF}_p(x)$ by extracting the constant term of a rational function. We use $\textrm{RGF}_p(x)$ to give a system of generators of the quotient of the numerical semigroup $\langle a_1,a_2,a_3\rangle$ by $p$ for a small positive integer $p$ and we characterise the generators for $\frac{\langle A\rangle}{p}$ for a general numerical semigroup $A$ and any positive integer $p$.
\end{abstract}

\def\D{{\mathcal{D}}}

\noindent
\begin{small}
\emph{Mathematic subject classification}: Primary 11D07; Secondary 05A15, 11D04, 20M13.
\end{small}

\noindent
\begin{small}
\emph{Keywords}: Quotient of a numerical semigroup; Generator; Sylvester denumerant; MacMahon's partition analysis.
\end{small}

\section{Introduction}

Throughout this paper, $\mathbb{Z}$, $\mathbb{N}$, and $\mathbb{Z}^{+}$ denote the set of all integers, non-negative integers, and positive integers, respectively.

A subset $S$ of $\mathbb{N}$ is a \emph{numerical semigroup} if $0\in S$, $\mathbb{N}\setminus S$ is finite and $S$ is closed under the addition in $\mathbb{N}$.
Given a positive integer sequence $A=(a_1,a_2,...,a_k)$, if $\gcd(A)=1$, then
$$\langle A\rangle=\left\{x_1a_1+x_2a_2+\cdots +x_ka_k \mid k\geq 2, x_i\in \mathbb{N}, 1\leq i\leq k \right\}$$
is a numerical semigroup (see \cite{J.C.Rosales}). And, we say that $A$ is a \emph{system of generators} of $S=\langle A\rangle$. If no proper subsequence (or subset) of $A$ generates $S$, then we say that $A$ is a \emph{minimal system of generators} of $S$.
For the above numerical semigroup $\langle A\rangle$, Sylvester \cite{J.J.Sylvester57} defined the function $d(a_0;a_1,a_2,...,a_k)$, called the denumerant, as
$$d(a_0;a_1,a_2,...,a_k)=\#\{(x_1,...,x_k)\mid x_1a_1+x_2a_2+\cdots +x_ka_k=a_0,\ \ x_i\in \mathbb{N} \}.$$
That is, the number of representations of $a_0$ by nonnegative integer linear combinations of $a_1,a_2,...,a_k$.
If $\gcd(A)=1$, then there exists a positive integer $N$ such that $d(a_0;a_1,...,a_k)>0$ for any integer $a_0\geq N$ (see, e.g., \cite[Theorem 1.0.1]{Ramrez Alfonsn}). The \emph{Frobenius number} of $A$ is defined by
$$ F(A)=\max\{a_0\in \mathbb{Z}^{+}\mid d(a_0;a_1,a_2,...,a_k)=0\}.$$
In other words, $F(A)$ is the greatest integer not belonging to $\langle A\rangle$. For more descriptions and results about numerical semigroups, see \cite{Ramrez Alfonsn,A.Assi,J.C.Rosales}.

Suppose $\langle A\rangle$ is a numerical semigroup and $p\in \mathbb{Z}^{+}$. In \cite{Rosales03}, the numerical semigroup
$$\frac{\langle A\rangle}{p}=\{n\in \mathbb{N} \mid pn\in \langle A\rangle\}=\{n \mid pn=x_1a_1+x_2a_2+\cdots +x_ka_k, x_i\in\mathbb{N}, 1\leq i\leq k\}$$
called the \emph{quotient of $\langle A\rangle$ by $p$} was introduced. It is easy to verify that $\frac{\langle A\rangle}{p}$ is a numerical semigroup, that $\langle A\rangle \subseteq\frac{\langle A\rangle}{p}$, and that $\frac{\langle A\rangle}{p}=\mathbb{N}$ if and only if $p\in\langle A\rangle$. For example, let $p=3$ and  $\langle A\rangle=\langle 5,6\rangle=\{0,5,6,10,11,12,15,16,17,18,20\rightarrow\}$, where the symbol $\rightarrow$ means to include all subsequent integers. Then
$\frac{\langle A\rangle}{3}=\{0,2,4\rightarrow\}=\langle 2,5\rangle$.

Let $a_1,a_2,p$ be pairwise relatively prime positive integers. Rosales \cite{Rosales05} obtained a system of generators for $\frac{\langle a_1,a_2\rangle}{2}$ and Rosles and Urbano-Blanco \cite{Rosales2006} gave a characterisation of a system of generators for $\frac{\langle a_1,a_2\rangle}{p}$ by
 means of modular permutations and certain congruence equations. In \cite{E.Cabanillas}, E. Cabanillas discussed the minimal generators of $\frac{\langle a_1,a_2\rangle}{p}$. In \cite{Alessio19}, A. Moscariello also gave a depiction of the generating system of $\frac{\langle A\rangle}{p}$ by defining a class of partitions. There are many open problems related to $\frac{\langle A\rangle}{p}$ (see, e.g., \cite{MDelgado13}).

For $n\in \frac{\langle A\rangle}{p}$, we consider the generating function
$$\textrm{RGF}_p(x)=\sum_{n\geq 0}d(pn;a_1,a_2,...,a_k)x^n,$$
which is referred to as the \emph{representation generating function} of $\frac{\langle A\rangle}{p}$.
The function $\textrm{RGF}_p(x)$ is easily seen to be rational. It is used to obtain a system of generators for $\frac{\langle A\rangle}{p}$.
For example, let $a_1$ and $a_2$ be relatively prime odd positive integers. Then we have
$$\sum_{n\geq 0}d(n;a_1,a_2)x^n=\frac{1}{(1-x^{a_1})(1-x^{a_2})}.$$
For $p=2$, the representation generating function is determined by
$$\textrm{RGF}_2(x^2)=\frac{1}{2}\left(\frac{1}{(1-x^{a_1})(1-x^{a_2})}+\frac{1}{(1-(-x)^{a_1})(1-(-x)^{a_2})}\right)
=\frac{1+x^{a_1+a_2}}{(1-x^{2a_1})(1-x^{2a_2})}.$$
Therefore, we have $\frac{\langle a_1,a_2\rangle}{2}=\langle a_1,a_2,\frac{a_1+a_2}{2}\rangle$. We take several cases in \cite{Rosales05,Rosales2006,Alessio19} as examples and recompute the corresponding generators to check the validity of this idea.

In this paper, we use the idea of \emph{MacMahon's Partition Analysis} \cite{MacMahonCA}
 to represent $\textrm{RGF}_p(x)$ as the constant term of a rational function in a new variable $\lambda$.
 For some small $p\in\mathbb{Z}^{+}$ and $A=(a_1,a_2,a_3)$, we can calculate $\textrm{RGF}_p(x)$ and further obtain a system of generators of the quotient of the numerical semigroup $\langle A\rangle$ by $p$.
Let $a_i=pk_i+t_i$, $0\leq t_i\leq p-1$, $p,k_i\in\mathbb{Z}^{+}$ for $1\leq i\leq 3$ and $\gcd(A)=1$. We obtain the following results in Table \ref{tab-GOr}.
\begin{table}[htbp]
    	\centering
    	\caption{A system of generators of $\frac{\langle A\rangle}{p}$ with $p=2,3$.}\label{tab-GOr}
    	\begin{tabular}{|c|c|c|c||l|}
    		\hline \hline
    $p$ & $t_1$ & $t_2$ & $t_3$ & A system of generators of $\frac{\langle A\rangle}{p}$  \\
    		\hline
    $2$ & $0$ & $0$ & $1$ & $\langle \frac{a_1}{2},\frac{a_2}{2},a_3\rangle$ \\
    		\hline
    $2$ & $0$ & $1$ & $1$ & $\langle \frac{a_1}{2},a_2,a_3,\frac{a_2+a_3}{2}\rangle$ \\
    		\hline
    $2$ & $1$ & $1$ & $1$ & $\langle a_1,a_2,a_3,\frac{a_1+a_2}{2},\frac{a_1+a_3}{2},\frac{a_2+a_3}{2}\rangle$ \\
    		\hline\hline
    $3$ & $0$ & $0$ & $1$ & $\langle \frac{a_1}{3},\frac{a_2}{3},a_3\rangle$ \\
    		\hline
    $3$ & $0$ & $0$ & $2$ & $\langle \frac{a_1}{3},\frac{a_2}{3},a_3\rangle$ \\
    		\hline
    $3$ & $0$ & $1$ & $1$ & $\langle \frac{a_1}{3},a_2,a_3,\frac{2a_3+a_2}{3},\frac{2a_2+a_3}{3}\rangle$ \\
    		\hline
    $3$ & $0$ & $1$ & $2$ & $\langle \frac{a_1}{3},a_2,a_3,\frac{a_2+a_3}{3}\rangle$ \\
    		\hline
    $3$ & $0$ & $2$ & $2$ & $\langle \frac{a_1}{3},a_2,a_3,\frac{2a_2+a_3}{3},\frac{a_2+2a_3}{3}\rangle$ \\
    		\hline
    $3$ & $1$ & $1$ & $1$ & $\langle a_1,a_2,a_3,\frac{2a_1+a_2}{3},\frac{2a_1+a_3}{3},\frac{2a_2+a_1}{3},\frac{2a_2+a_3}{3},
    \frac{2a_3+a_1}{3},\frac{2a_3+a_2}{3},\frac{a_1+a_2+a_3}{3}\rangle$ \\
    		\hline
    $3$ & $1$ & $1$ & $2$ & $\langle a_1,a_2,a_3,\frac{a_1+a_3}{3},\frac{a_2+a_3}{3},\frac{2a_1+a_2}{3},\frac{2a_2+a_1}{3}\rangle$ \\
    		\hline
    $3$ & $1$ & $2$ & $2$ & $\langle a_1,a_2,a_3,\frac{a_1+a_2}{3},\frac{a_1+a_3}{3},\frac{2a_2+a_3}{3},\frac{2a_3+a_2}{3}\rangle$ \\
    		\hline
    $3$ & $2$ & $2$ & $2$ & $\langle a_1,a_2,a_3,\frac{2a_1+a_2}{3},\frac{2a_1+a_3}{3},\frac{2a_2+a_1}{3},\frac{2a_2+a_3}{3},
    \frac{2a_3+a_1}{3},\frac{2a_3+a_2}{3},\frac{a_1+a_2+a_3}{3}\rangle$ \\
    		\hline
    	\end{tabular}
    \end{table}

Our idea is extended to give the following simple characterizations about $\frac{\langle A\rangle}{p}$.
\begin{thm}\label{generalQuotiA}
Suppose $A=(a_1,a_2,...,a_n)=(pk_1+t_1,pk_2+t_2,...,pk_n+t_n)$ with $\gcd(A)=1$, $p\in\mathbb{Z}^{+}$, $k_i\in \mathbb{N}$, $1\leq t_i\leq p-1$ for $1\leq i\leq n$, $n\geq 2$. Let
$$\mathcal{T}_p=\left\{(x_1,x_2,...,x_n) \mid 0\leq x_1,x_2,...,x_n\leq p-1,\ \ p\ \Big| \ \sum_{i=1}^nx_it_i \ (\neq 0)\right\}.$$
Then a system of generators of the quotient of the numerical semigroup $\langle A\rangle$ by $p$ is given by
$$\frac{\langle A\rangle}{p}=\left\langle a_1,a_2,...,a_n,\frac{1}{p}\sum_{i=1}^nx_ia_i\ \Big|\  (x_1,x_2,...,x_n)\in \mathcal{T}_p\right\rangle.$$
\end{thm}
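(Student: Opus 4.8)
The plan is to show two inclusions: that every element of the displayed right-hand side lies in $\frac{\langle A\rangle}{p}$, and that $\frac{\langle A\rangle}{p}$ contains no elements not generated by the listed set. The first direction is the easy one: each $a_j$ lies in $\langle A\rangle \subseteq \frac{\langle A\rangle}{p}$, and for $(x_1,\dots,x_n)\in\mathcal{T}_p$ the integer $m=\frac1p\sum_i x_i a_i$ satisfies $pm=\sum_i x_i a_i\in\langle A\rangle$, so $m\in\frac{\langle A\rangle}{p}$; moreover $m$ is a genuine nonnegative integer because $p\mid\sum_i x_i t_i$ and $\sum_i x_i a_i=\sum_i x_i(pk_i+t_i)=p\sum_i x_i k_i+\sum_i x_i t_i$. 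Hence the generated semigroup is contained in $\frac{\langle A\rangle}{p}$.

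For the reverse inclusion I would take an arbitrary $n\in\frac{\langle A\rangle}{p}$ and a representation $pn=\sum_{i=1}^n y_i a_i$ with $y_i\in\mathbb{N}$, and apply the division algorithm to each coefficient: write $y_i=p q_i+x_i$ with $0\le x_i\le p-1$. Then
$$pn=\sum_{i=1}^n (pq_i+x_i)a_i = p\sum_{i=1}^n q_i a_i + \sum_{i=1}^n x_i a_i,$$
so $\sum_{i=1}^n x_i a_i$ is divisible by $p$. Reducing mod $p$ and using $a_i\equiv t_i$, we get $p\mid\sum_i x_i t_i$; thus either all $x_i=0$, or $(x_1,\dots,x_n)\in\mathcal{T}_p$ (after noting $\sum_i x_i t_i\neq 0$ in the nonzero case — this needs a small argument, see below). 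In the first case $n=\sum_i q_i a_i\in\langle a_1,\dots,a_n\rangle$. In the second case, set $g=\frac1p\sum_i x_i a_i$, one of the listed generators, and then $n=\sum_i q_i a_i+g$ lies in the generated semigroup. Iterating (or rather, since the decomposition is a single step, just reading it off) gives the claim.

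The one genuine subtlety is handling the case $\sum_i x_i t_i=0$: this forces $x_i=0$ for every $i$ with $t_i>0$, but some $a_i$ could have $t_i=0$ — except the hypothesis states $1\le t_i\le p-1$ for all $i$, so $t_i\ge 1$ always, and $\sum_i x_i t_i=0$ with $x_i\ge 0$ indeed forces all $x_i=0$. So under the stated hypotheses this case is clean and the "$\neq 0$" condition in the definition of $\mathcal{T}_p$ is automatically satisfied whenever not all $x_i$ vanish. I would also remark that the $a_i$ themselves need not be among the minimal generators of $\frac{\langle A\rangle}{p}$ — the theorem only asserts that the listed set generates, not that it is minimal — so no reduction step is required. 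The main thing to be careful about is simply that $g$ is a nonnegative integer and that the reduction of coefficients mod $p$ is compatible with the congruence $a_i\equiv t_i\pmod p$; both are immediate, so I expect no serious obstacle, and the proof should be short.
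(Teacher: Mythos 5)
Your proposal is correct and follows essentially the same route as the paper's proof: reduce each coefficient $y_i$ modulo $p$ via the division algorithm, observe that the residual sum $\sum_i x_i a_i$ is divisible by $p$ and hence (unless all $x_i$ vanish) corresponds to a tuple in $\mathcal{T}_p$, and read off the decomposition. Your explicit handling of the $\sum_i x_i t_i \neq 0$ case is a small point the paper leaves implicit, but the argument is the same.
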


The paper is organized as follows.
In Section 2, we introduce some preliminary knowledge related to MacMahon's Partition Analysis and the constant term method \cite{Xin04,Xin15}. We calculate $\textrm{RGF}_p(x)$ and obtain a system of generators for  $\frac{\langle 3k_1+1,3k_2+2,3k_3+2\rangle}{3}$ and $\frac{\langle a,a+1\rangle}{a-1}$ to illustrate how to use the method.
In Section 3, we give the proof of Theorem \ref{generalQuotiA}.

\section{Preliminary: MacMahon's Partition Analysis}

In algebraic combinatorics, MacMahon's Partition Analysis \cite{MacMahonCA} is one of the suitable tools for solving counting problems in connection with linear Diophantine equations and inequalities. Such problems can be transformed into the constant term of an Elliott-rational function, which is a rational function
whose denominator is a product of binomials. This type of constant term has been restudied by Andrews et. al. using computer algebra \cite{Andrews2001,Andrews2000,AndrewsPaule}. Algorithms together with implementation have been developed, such as the Omega package \cite{Andrews2001}, the Ell package \cite{Xin04}, and the CTEuclid package \cite{Xin15}. We will work with symbolic data.

We need to introduce some basic definitions and results in \cite{Xin04,Xin15}. The theory relies on the unique series expansion of rational functions, for otherwise, the constant term of a rational function is not well-defined.

We shall work in the field $K=\Q((\lambda))((x))$ of the double Laurent series. In this field, every rational function has a unique Laurent series expansion, 
so that the following definition works.
\begin{dfn}[\cite{Xin04}]\label{dfn-natural}
Suppose an element in $K=\Q((\lambda))((x))$ is written as a formal Laurent series $\sum_{i=-\infty}^{\infty} a_{i}\lambda^{i}$ in $\lambda$,
where $a_{i}$ are elements in $\Q((x))$. Then the constant term operator $\mathrm{CT}_{\lambda}$ acts by
$$\mathop{\mathrm{CT}}\limits_{\lambda}\sum_{i=-\infty}^{\infty}a_{i}\lambda^{i}=a_0.$$
\end{dfn}
This definition extends to $\CT_{\Lambda}$ for a set of variables $\Lambda =\{ \lambda_1,\lambda_2, ...,\lambda_m\}$ in \cite{Xin04}.
Here we only need the case $m=1$.

To use rational functions in $K$, we need to clarify their series expansion. A monomial $M=x^k \lambda^\ell \neq 1$ is said to be \emph{small}, denoted $M<1$,
if $k>0$ or if $k=0$ and $\ell>0$, and is said to be \emph{large}, denoted $M>1$, otherwise. Then we can determine which of
the following two series expansions holds in $K$.
\begin{align*}
\dfrac{1}{1-M}=\left\{
             \begin{array}{lr}
             \sum_{k\geq 0}M^k&\ \text{if}\ M<1; \\
            \dfrac{1}{-M(1-1/M)}=-\sum_{k\geq 0}\dfrac{1}{M^{k+1}}&\ \text{if}\  M>1.
             \end{array}
\right.
\end{align*}

Concerning the series expansion of an Elliott-rational function $E$, we usually write $E$ in its proper form:
\begin{equation}\label{f-Elliott}
E = \frac{L}{
\prod_{j=1}^n (1-M_j)}=L\prod_{j=1}^n \Big(\sum_{k\ge 0} (M_j) ^k\Big),
\end{equation}
where $L$ is a Laurent polynomial and each monomial $M_j$ is small. Note that the proper form of $E$ is not unique.
For instance, $1/(1-x)=(1+x)/(1-x^2)$ are both proper forms.

\subsection{Extract Constant Term}
Consider the $\textrm{RGF}_p(x)$ of a numerical semigroup $\frac{\langle A\rangle}{p}$, where $\langle A\rangle=\langle a_1,a_2,...,a_k\rangle$, $\gcd(A)=1$ and $p\in \mathbb{Z}^{+}$. We introduce a new variable $\lambda$ to replace the linear constraint $pn=c_1a_1+c_2a_2+\cdots +c_ka_k$, so we have
\begin{align}\label{RGFxCT}
\textrm{RGF}_p(x)&=\sum_{n\geq 0}d(pn;a_1,a_2,...,a_k)x^n\notag
\\&=\sum_{n\geq 0, c_i\geq 0}\mathop{\mathrm{CT}}\limits_{\lambda}\lambda^{c_1a_1+c_2a_2+\cdots +c_ka_k-pn}x^n\notag
\\&=\mathop{\mathrm{CT}}\limits_{\lambda}\frac{1}{\left(1-\frac{x}{\lambda^p}\right)
(1-\lambda^{a_1})(1-\lambda^{a_1})\cdots (1-\lambda^{a_k})}.
\end{align}
For the third ``=", we used the sum of a geometric series and the linearity of the $\CT$ operator.
The above expression is a power series in $x$ but the power in $\lambda$ ranges from $-\infty$ to $\infty$.
Thus we have represented $\textrm{RGF}_p(x)$ as the constant term of an Elliott rational function.

\begin{rem}
The Frobenius number of $\frac{\langle A\rangle}{p}$ is the greatest integer $m$ with $\textrm{RGF}_p^{(m)}(0)=0$, i.e.,
$$F\left(\frac{\langle A\rangle}{p}\right)=\max\{n\in \mathbb{N}\mid d(pn;a_1,a_2,...,a_k)=0\}
=\max\left\{m \in\mathbb{N}\mid \textrm{RGF}_p^{(m)}(0)=0\right\}.$$
\end{rem}

To extract the constant term, we use partial fraction decompositions of univariate rational functions, from which the constant term can be read off.
To this end,
we write
\begin{equation}
E=\dfrac{L(\lambda)}{\prod_{i=1}^n(1-u_i\lambda^{a_i})}, \label{NEE}
\end{equation}
where $L(\lambda)$ is a Laurent polynomial, $u_i$ are free of $\lambda$ and $a_i$ are positive integers for all $i$.
Note that we might have $u_i\lambda^{a_i}=x^{-1}\lambda^2>1$, so that \eqref{NEE} is not a proper form.

We need the following three results.
\begin{prop}[\cite{Xin15}]\label{propEPFD}
Suppose that the partial fraction decomposition of $E$ is given by
\begin{equation}
E=P(\lambda)+\dfrac{p(\lambda)}{\lambda^k}+\sum_{i=1}^n\dfrac{A_i(\lambda)}{1-u_i\lambda^{a_i}}, \label{EE}
\end{equation}
where the $u_i$'s are free of $\lambda$, $P(\lambda), p(\lambda)$, and the $A_i(\lambda)$'s are all polynomials, $\deg_p(\lambda)<k$, and $\deg A_{i}(\lambda)<a_i$ for all $i$. Then we have
$$\mathop{\mathrm{CT}}\limits_{\lambda}E=P(0)+\sum_{u_i\lambda^{a_i}<1}A_i(0),$$
where the sum ranges over all $i$ such that $u_i\lambda^{a_i}$ is small in $\Q((\lambda))((x))$.
\end{prop}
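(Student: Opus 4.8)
The plan is to prove the formula by a direct, termwise computation, exploiting the $\Q((x))$-linearity of $\CT_{\lambda}$ together with the fact that each summand on the right-hand side of \eqref{EE} has a \emph{unique} Laurent expansion in $K=\Q((\lambda))((x))$. Since $\CT_{\lambda}$ reads off the coefficient of $\lambda^0$, it suffices to determine that coefficient for each of the three kinds of summands — the polynomial $P(\lambda)$, the principal part $p(\lambda)/\lambda^k$, and each rational piece $A_i(\lambda)/(1-u_i\lambda^{a_i})$ — and then add the contributions.

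First I would dispose of the two polynomial parts. For $P(\lambda)$, which is a polynomial in $\lambda$ with coefficients free of $\lambda$, the only term carrying $\lambda^0$ is its constant term, so $\CT_{\lambda}P(\lambda)=P(0)$. For $p(\lambda)/\lambda^k$, the hypothesis $\deg p<k$ forces every monomial of $p(\lambda)/\lambda^k$ to carry a strictly negative power of $\lambda$; hence no $\lambda^0$ term survives and $\CT_{\lambda}\big(p(\lambda)/\lambda^k\big)=0$.

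The substantive step is each rational summand $A_i(\lambda)/(1-u_i\lambda^{a_i})$, where I would split according to the small/large dichotomy recalled before \eqref{f-Elliott}. Writing $A_i(\lambda)=\sum_{j=0}^{a_i-1}a_{i,j}\lambda^j$ with $\deg A_i<a_i$: if $u_i\lambda^{a_i}<1$, expanding $1/(1-u_i\lambda^{a_i})=\sum_{m\ge 0}(u_i\lambda^{a_i})^m$ and multiplying by $A_i(\lambda)$ produces $\lambda$-exponents of the form $ma_i+j$ with $m\ge 0$ and $0\le j\le a_i-1$; since $a_i\ge 1$, the exponent $0$ is attained only at $m=j=0$, so the coefficient of $\lambda^0$ is exactly $a_{i,0}=A_i(0)$. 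If instead $u_i\lambda^{a_i}>1$, expanding $1/(1-u_i\lambda^{a_i})=-\sum_{m\ge 0}(u_i\lambda^{a_i})^{-m-1}$ and multiplying by $A_i(\lambda)$ produces exponents $j-(m+1)a_i$, which are strictly negative because $j\le a_i-1<(m+1)a_i$; hence no $\lambda^0$ term appears and this summand contributes $0$. Summing the three contributions by linearity yields $\CT_{\lambda}E=P(0)+\sum_{u_i\lambda^{a_i}<1}A_i(0)$, as claimed.

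The only delicate point — and the place I would be careful — is the large case: the degree bound $\deg A_i<a_i$ is precisely what guarantees that every exponent $j-(m+1)a_i$ remains negative, so that a summand whose denominator expands in descending powers of $\lambda$ contributes nothing to the constant term. It is likewise worth stressing that the well-definedness of the whole argument rests on the uniqueness of the Laurent expansion in $K$: the small/large classification of each $u_i\lambda^{a_i}$ fixes the expansion direction of $1/(1-u_i\lambda^{a_i})$ unambiguously, which is exactly what makes $\CT_{\lambda}$ well-defined on each summand and permits the termwise use of linearity.
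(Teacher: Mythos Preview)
Your argument is correct and is essentially the same as the paper's: the paper justifies the proposition by ``direct series expansion,'' splitting each $A_i(\lambda)/(1-u_i\lambda^{a_i})$ into the small and large cases and observing that in the large case the proper form $\lambda^{-a_i}A_i(\lambda)/(-u_i(1-1/(u_i\lambda^{a_i})))$ contributes $0$, while in the small case it contributes $A_i(0)$. Your write-up simply spells out these expansions (and the trivial $P(\lambda)$ and $p(\lambda)/\lambda^k$ cases) in more detail.
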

The proposition is true because direct series expansion gives
\begin{align*}
\dfrac{A_i(\lambda)}{1-u_i\lambda^{a_i}}=\left\{
             \begin{array}{lr}
             \dfrac{A_i(\lambda)}{1-u_i\lambda^{a_i}} \stackrel{\mathop{\mathrm{CT}}_{\lambda}}{\longrightarrow}A_i(0)&\ \text{if}\ \ u_i\lambda^{a_i}<1; \\
            \dfrac{A_i(\lambda)}{-u_i\lambda^{a_i}(1-\dfrac{1}{u_i\lambda^{a_i}})}
            =\dfrac{\lambda^{-a_i}A_i(\lambda)}{-u_i(1-\dfrac{1}{u_i\lambda^{a_i}})}
            \stackrel{\mathop{\mathrm{CT}}_{\lambda}}{\longrightarrow}0&\ \text{if}\ \ u_i\lambda^{a_i}>1.
             \end{array}
\right.
\end{align*}
For clarity, we have written the rational function in its proper form before applying $\CT_\lambda$.

\begin{thm}[\cite{Xin15}]\label{charAS}
Let $E$ be as in Equation \eqref{EE}. Then $A_s(\lambda)$ is uniquely characterized by
\begin{equation}\label{AS}
             A_s(\lambda)\equiv E\cdot (1-u_s\lambda^{a_s})\ \mod\langle 1-u_s\lambda^{a_s}\rangle,\ \
             deg_{\lambda}A_s<a_s,
\end{equation}
where $\langle 1-u_s\lambda^{a_s}\rangle$ denotes the ideal generated by $1-u_s\lambda^{a_s}$.
\end{thm}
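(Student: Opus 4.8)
The plan is to read \eqref{AS} as two separate assertions: that the stated congruence together with the degree bound $\deg_\lambda A_s<a_s$ has a unique solution, and that the numerator $A_s(\lambda)$ produced by the decomposition \eqref{EE} is exactly that solution. I would work in the quotient ring $\bar R=K[\lambda]/\langle 1-u_s\lambda^{a_s}\rangle$, where $K$ is the coefficient field containing $x$ and the $u_i$ (so each $u_i\neq 0$ is a unit). The structural fact driving everything is that $1-u_s\lambda^{a_s}$ is a polynomial in $\lambda$ of degree $a_s$ whose leading coefficient $-u_s$ is a unit; by the division algorithm, every residue class of $\bar R$ then has a unique representative of degree $<a_s$.

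For uniqueness I would argue directly, so as to handle the Laurent setting cleanly. If $A_s$ and $B$ both satisfy \eqref{AS}, then $A_s-B$ is a polynomial of degree $<a_s$ lying in $\langle 1-u_s\lambda^{a_s}\rangle$, hence $A_s-B=(1-u_s\lambda^{a_s})\,q(\lambda)$ for some Laurent polynomial $q$. Writing $q=\sum_{j=e}^d c_j\lambda^j$ with $c_e,c_d\neq 0$ when $q\neq 0$, the extreme terms of the product are $c_e\lambda^{e}$ and $-u_s c_d\lambda^{d+a_s}$ (no cancellation at either end), so the gap between its smallest and largest exponents is at least $a_s$; but $A_s-B$ has all exponents in $[0,a_s-1]$, a gap $<a_s$. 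This forces $q=0$, hence $A_s=B$, and the argument never uses where $A_s$ came from.

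For existence I would multiply \eqref{EE} through by $(1-u_s\lambda^{a_s})$ to obtain
$$E\,(1-u_s\lambda^{a_s})=A_s(\lambda)+(1-u_s\lambda^{a_s})\left(P(\lambda)+\frac{p(\lambda)}{\lambda^k}+\sum_{i\neq s}\frac{A_i(\lambda)}{1-u_i\lambda^{a_i}}\right).$$
The parenthesised factor $H(\lambda)$ has only the denominators $\lambda^k$ and the $1-u_i\lambda^{a_i}$ with $i\neq s$. Since $\lambda$ is a unit in $\bar R$ (indeed $\lambda\cdot u_s\lambda^{a_s-1}=u_s\lambda^{a_s}=1$ there), and since each $1-u_i\lambda^{a_i}$ with $i\neq s$ is invertible modulo $\langle 1-u_s\lambda^{a_s}\rangle$, the element $H(\lambda)$ descends to $\bar R$; multiplying it by $(1-u_s\lambda^{a_s})\equiv 0$ kills that term. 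Thus $E(1-u_s\lambda^{a_s})\equiv A_s(\lambda)$, and with $\deg_\lambda A_s<a_s$ it is precisely the distinguished representative identified above. Equivalently, using the closed form \eqref{NEE} one has $E(1-u_s\lambda^{a_s})=L(\lambda)/\prod_{i\neq s}(1-u_i\lambda^{a_i})$, whose reduction in $\bar R$ is well-defined and must agree with that of $A_s$.

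The one point I expect to need care is the invertibility of the remaining denominators modulo $\langle 1-u_s\lambda^{a_s}\rangle$: $\lambda$ is harmless, but $1-u_i\lambda^{a_i}$ is a unit in the Artinian ring $\bar R$ exactly when it is coprime to $1-u_s\lambda^{a_s}$ in $K[\lambda]$ (equivalently, is not a zero-divisor in $\bar R$). This pairwise coprimality of the simple factors is the standing hypothesis under which \eqref{EE} is a genuine simple-pole partial fraction decomposition, so I would state it explicitly; granting it, the remainder of the argument is purely formal.
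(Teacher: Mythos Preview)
The paper does not supply its own proof of this theorem; it is quoted from \cite{Xin15} and stated without argument. Your proposal is therefore not comparable to anything in the paper, but it is a correct and self-contained proof. The uniqueness step is clean: any difference of two solutions is a polynomial of degree $<a_s$ in the ideal $\langle 1-u_s\lambda^{a_s}\rangle$, and your exponent-span argument (valid even for Laurent quotients) forces it to vanish. For existence, multiplying \eqref{EE} by $1-u_s\lambda^{a_s}$ and reducing in $\bar R=K[\lambda]/\langle 1-u_s\lambda^{a_s}\rangle$ is exactly the right move; the key observation that $\lambda$ becomes a unit in $\bar R$ (so the $p(\lambda)/\lambda^k$ term causes no trouble) is easy to miss and you handle it correctly.

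The only genuine hypothesis you need, and rightly flag, is the pairwise coprimality of $1-u_s\lambda^{a_s}$ with the remaining factors $1-u_i\lambda^{a_i}$, $i\neq s$; without it the reduction of $H(\lambda)$ in $\bar R$ is ill-defined. This is indeed the implicit assumption behind the simple-pole decomposition \eqref{EE}, so stating it explicitly is the honest thing to do. With that in place your argument is complete.
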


In order to compute $\mathop{\mathrm{CT}}_{\lambda}E$ for $E$ as in Equation \eqref{NEE} in $K$, we need to compute
$$A_s(0):=\mathcal{A}_{1-u_s\lambda^{a_s}}E=\mathcal{A}_{1-(u_s\lambda^{a_s})^{-1}}E,$$
where $A_s(\lambda)$ is characterized by Equation \eqref{AS}. Thus in the new notation, Proposition \ref{propEPFD} reads
\begin{equation}
\mathop{\mathrm{CT}}\limits_{\lambda}E=P(0)
+\sum_{i}\chi(u_i\lambda^{a_i}<1)\mathcal{A}_{1-u_i\lambda^{a_i}}E.\label{EP0}
\end{equation}

\begin{thm}[\cite{Xin15}]\label{LemE}
Let $E$ be as in \eqref{NEE}. If $E$ is proper in $\lambda$, i.e., the degree in the numerator is less than the degree in the denominator, then
\begin{equation}
\mathop{\mathrm{CT}}\limits_{\lambda}E
=\sum_{i=1}^n\chi(u_i\lambda^{a_i}<1)\mathcal{A}_{1-u_i\lambda^{a_i}}E.\label{CE}
\end{equation}
If $E|_{\lambda=0}=\lim_{\lambda\rightarrow 0}E$ exists, then
\begin{equation}
\mathop{\mathrm{CT}}\limits_{\lambda}E
=E|_{\lambda=0}-\sum_{i=1}^n\chi(u_i\lambda^{a_i}>1)\mathcal{A}_{1-u_i\lambda^{a_i}}E.\label{DCE}
\end{equation}
\end{thm}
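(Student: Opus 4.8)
The plan is to deduce both identities from the master formula \eqref{EP0}, which already expresses $\CT_\lambda E$ in terms of the polynomial part $P(\lambda)$ of the partial fraction decomposition \eqref{EE} together with the small residues $\mathcal{A}_{1-u_i\lambda^{a_i}}E=A_i(0)$. Thus the entire argument reduces to pinning down $P(0)$ under each of the two hypotheses. Before starting, I would record one elementary fact used throughout: since each $a_i$ is a positive integer and each $u_i$ is free of $\lambda$, the monomial $u_i\lambda^{a_i}$ can never equal $1$, so it is always either small or large; consequently $\chi(u_i\lambda^{a_i}<1)+\chi(u_i\lambda^{a_i}>1)=1$ for every $i$.

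For \eqref{CE}, I would show that properness forces $P(\lambda)\equiv 0$. Comparing top $\lambda$-degrees, the denominator $\prod_{i=1}^n(1-u_i\lambda^{a_i})$ has $\lambda$-degree $\sum_{i=1}^n a_i$, so the hypothesis that the numerator degree is strictly smaller means $E\to 0$ as $\lambda\to\infty$. In the decomposition \eqref{EE}, the principal part $p(\lambda)/\lambda^k$ tends to $0$ at infinity (as $\deg p<k$), and each pole part $A_i(\lambda)/(1-u_i\lambda^{a_i})$ tends to $0$ as well (as $\deg A_i<a_i$); hence $E\to P(\infty)$, and a polynomial tending to $0$ at infinity must be the zero polynomial. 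Therefore $P(\lambda)\equiv 0$, so $P(0)=0$, and substituting into \eqref{EP0} yields \eqref{CE} at once.

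For \eqref{DCE}, the hypothesis that $E|_{\lambda=0}$ exists says precisely that $E$ has no pole at $\lambda=0$, so the principal part $p(\lambda)/\lambda^k$ in \eqref{EE} is absent. The decomposition then reads $E=P(\lambda)+\sum_{i=1}^n A_i(\lambda)/(1-u_i\lambda^{a_i})$, and I would evaluate both sides at $\lambda=0$. Because every $a_i>0$, each $u_i\lambda^{a_i}$ specializes to $0$ and the corresponding denominator to $1$, giving $E|_{\lambda=0}=P(0)+\sum_{i=1}^n A_i(0)=P(0)+\sum_{i=1}^n\mathcal{A}_{1-u_i\lambda^{a_i}}E$. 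Solving for $P(0)$ and inserting it into \eqref{EP0} produces
\begin{equation*}
\CT_\lambda E=E|_{\lambda=0}-\sum_{i=1}^n\mathcal{A}_{1-u_i\lambda^{a_i}}E+\sum_{i=1}^n\chi(u_i\lambda^{a_i}<1)\mathcal{A}_{1-u_i\lambda^{a_i}}E,
\end{equation*}
and the bookkeeping identity $\chi(u_i\lambda^{a_i}<1)-1=-\chi(u_i\lambda^{a_i}>1)$ collapses the two sums into $-\sum_{i=1}^n\chi(u_i\lambda^{a_i}>1)\mathcal{A}_{1-u_i\lambda^{a_i}}E$, which is exactly \eqref{DCE}.

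I expect the main difficulty to be conceptual rather than computational: one must keep straight the two genuinely different meanings of ``$\lambda=0$'' in play. The specialization $E|_{\lambda=0}$ implicitly expands every $1/(1-u_i\lambda^{a_i})$ as a small geometric series, whereas the constant term $\CT_\lambda E$ lives in $K=\Q((\lambda))((x))$ and must use the large expansion for those $u_i\lambda^{a_i}$ that are large; the residues $\mathcal{A}_{1-u_i\lambda^{a_i}}E$ are precisely the corrections reconciling these two expansions, which is why routing everything through the already-established formula \eqref{EP0} is cleaner than manipulating series by hand. A secondary point worth checking is that the degree comparison behind \eqref{CE} is unaffected by the Laurent (rather than polynomial) nature of $L(\lambda)$: any negative powers of $\lambda$ in $L$ feed only the principal part at $\lambda=0$ and never the polynomial part $P(\lambda)$, so the limit argument at $\lambda=\infty$ still applies verbatim.
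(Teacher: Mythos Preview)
The paper does not actually prove Theorem~\ref{LemE}: it is quoted verbatim from \cite{Xin15} and used as a black box, so there is no in-paper argument to compare against. Your derivation from the already-stated formula~\eqref{EP0} is correct and is the natural way to recover both \eqref{CE} and \eqref{DCE} from Proposition~\ref{propEPFD}; in particular, the two key observations---that properness kills the polynomial part $P(\lambda)$ via the limit at $\lambda\to\infty$, and that regularity at $\lambda=0$ kills the principal part and lets one read off $P(0)=E|_{\lambda=0}-\sum_i A_i(0)$---are exactly right, and the trichotomy remark $\chi(<1)+\chi(>1)=1$ is justified since each $a_i>0$ forces $u_i\lambda^{a_i}\neq 1$.
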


Equation \eqref{DCE} is a kind of dual of Equation \eqref{CE}. Because of these two formulas, it is convenient to call the denominator factor $1-u_i\lambda^{a_i}$ contributing if $u_i\lambda^{a_i}$ is small and dually contributing
if $u_i\lambda^{a_i}$ is large. Now we also denote
$$\mathop{\mathrm{CT}}\limits_{\lambda}\underline{\dfrac{1}{1-u_s\lambda^{a_s}}}E(1-u_s\lambda^{a_s})=\mathcal{A}_{1-u_s\lambda^{a_s}}E=A_s(0),$$
For this notation, we allow $a_s<0$. One can think that only the single underlined factor of the denominator contributes when taking the constant term in $\lambda$.

\begin{lem}\label{l-000}
If $E$ as in \eqref{NEE} is proper in $\lambda$, i.e., the degree in the numerator is less than the degree in the denominator,  and
$E|_{\lambda=0}=0$, then
$$ \sum_{s=1}^n \mathcal{A}_{1-u_s\lambda^{a_s}}E =0.$$
\end{lem}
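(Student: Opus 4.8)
The plan is to combine the two complementary formulas of Theorem \ref{LemE}. Since $E$ is proper in $\lambda$, Equation \eqref{CE} applies and gives
$$\mathop{\mathrm{CT}}\limits_{\lambda}E=\sum_{i=1}^n\chi(u_i\lambda^{a_i}<1)\mathcal{A}_{1-u_i\lambda^{a_i}}E.$$
On the other hand, the hypothesis $E|_{\lambda=0}=0$ is exactly the condition under which the dual formula \eqref{DCE} simplifies: it yields
$$\mathop{\mathrm{CT}}\limits_{\lambda}E=-\sum_{i=1}^n\chi(u_i\lambda^{a_i}>1)\mathcal{A}_{1-u_i\lambda^{a_i}}E.$$
Subtracting the second identity from the first (equivalently, adding them after moving terms) makes $\mathop{\mathrm{CT}}_\lambda E$ cancel and produces
$$0=\sum_{i=1}^n\Big(\chi(u_i\lambda^{a_i}<1)+\chi(u_i\lambda^{a_i}>1)\Big)\mathcal{A}_{1-u_i\lambda^{a_i}}E.$$

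The remaining point is that for each $i$ exactly one of the two indicators is $1$: no monomial $u_i\lambda^{a_i}$ can equal $1$, because $a_i\neq 0$ (the $a_i$ are positive integers, or in the extended notation allowed after Theorem \ref{LemE}, nonzero integers), so the power of $\lambda$ is nonzero and $u_i\lambda^{a_i}$ is a genuine nontrivial monomial, hence classified as either small or large. Therefore $\chi(u_i\lambda^{a_i}<1)+\chi(u_i\lambda^{a_i}>1)=1$ for every $i$, and the displayed sum collapses to $\sum_{s=1}^n\mathcal{A}_{1-u_s\lambda^{a_s}}E=0$, as claimed.

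I do not anticipate a serious obstacle here; the only thing to be careful about is the bookkeeping of the indicator functions and confirming that Theorem \ref{LemE} is being invoked under exactly its stated hypotheses (properness for \eqref{CE}, and the existence of $E|_{\lambda=0}$ together with its vanishing for the simplified form of \eqref{DCE}). Both hypotheses are supplied verbatim in the statement of the lemma, so the argument is essentially a one-line consequence of the two formulas in Theorem \ref{LemE}.
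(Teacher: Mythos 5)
Your argument is correct and is exactly the derivation the paper intends (the lemma is stated without proof as an immediate consequence of Theorem \ref{LemE}): properness gives \eqref{CE}, the existence and vanishing of $E|_{\lambda=0}$ gives \eqref{DCE}, and equating the two expressions for $\mathop{\mathrm{CT}}_{\lambda}E$ yields the claim. Your observation that $\chi(u_i\lambda^{a_i}<1)+\chi(u_i\lambda^{a_i}>1)=1$ because $a_i\neq 0$ forces $u_i\lambda^{a_i}\neq 1$ is precisely the point that makes the two sums merge into $\sum_{s=1}^n\mathcal{A}_{1-u_s\lambda^{a_s}}E$.
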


\subsection{Two Examples}\label{AnExampleAA+1}

In this subsection, we obtain a system of generators for $\frac{\langle 3k_1+1,3k_2+2,3k_3+2\rangle}{3}$ and $\frac{\langle a, a+1\rangle}{a-1}$ by calculating their representation generating function $\textrm{RGF}_p(x)$, respectively.

\begin{prop}\label{Example3122}
Let $A=(a_1,a_2,a_3)=(3k_1+1,3k_2+2,3k_3+2)$, $k_1,k_2,k_3\in \mathbb{N}$ and $p=3$. Suppose $\gcd(A)=1$.
A system of generators of $\frac{\langle A\rangle}{p}$ is given by
\begin{equation}
  \frac{\langle A\rangle}{3}=\left\langle a_1,a_2,a_3,\frac{a_1+a_2}{3},\frac{a_1+a_3}{3},\frac{2a_2+a_3}{3},\frac{2a_3+a_2}{3}\right\rangle. \label{e-Aq3}
\end{equation}
\end{prop}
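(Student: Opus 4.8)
The plan is to compute $\textrm{RGF}_3(x)$ explicitly via the constant-term machinery of Section 2 and then read off the generators from its numerator. Starting from \eqref{RGFxCT}, we have
$$\textrm{RGF}_3(x)=\mathop{\mathrm{CT}}\limits_{\lambda}\frac{1}{\left(1-\frac{x}{\lambda^3}\right)(1-\lambda^{a_1})(1-\lambda^{a_2})(1-\lambda^{a_3})},$$
which after the substitution $x\mapsto x^3$ (so that the answer becomes a series in $x^3$, matching the fact that $d(3n;A)$ is indexed by $n$) is most cleanly handled by first extracting the constant term in $\lambda$. First I would apply Theorem \ref{LemE}: the factor $1-x\lambda^{-3}$ is the unique dually contributing factor (it equals $1-(x^{-1}\lambda^3)^{-1}$ with $x^{-1}\lambda^3>1$), while the three factors $1-\lambda^{a_i}$ are contributing. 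Using \eqref{DCE} with $E|_{\lambda=0}$ interpreted appropriately, or equivalently \eqref{CE} directly, reduces the computation to the single residue-type term $\mathcal{A}_{1-x\lambda^{-3}}E$, i.e. evaluating the rest of $E$ on the locus $\lambda^3=x$. Concretely, $\mathcal{A}_{1-x\lambda^{-3}}E=\frac{1}{(1-\lambda^{a_1})(1-\lambda^{a_2})(1-\lambda^{a_3})}$ averaged over the three cube roots $\lambda\in\{\omega^j x^{1/3}\}_{j=0,1,2}$ with $\omega=e^{2\pi i/3}$, which is the familiar roots-of-unity (Comtet/"multisection") formula. Writing $a_i=3k_i+t_i$ with $(t_1,t_2,t_3)=(1,2,2)$, the factor $1-\lambda^{a_i}$ becomes $1-\omega^{jt_i}x^{k_i+t_i/3}$, so the three contributions combine into a rational function in $x^{1/3}$; collecting over $j$ kills the fractional powers and yields $\textrm{RGF}_3$ as a rational function in $x$.

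The second step is the algebraic simplification: clear denominators to write the averaged sum as $\frac{N(x)}{(1-x^{a_1})(1-x^{a_2})(1-x^{a_3})}$ and then show the numerator $N(x)$ factors so that $\textrm{RGF}_3(x)$ has the form
$$\textrm{RGF}_3(x)=\frac{\text{(numerator)}}{(1-x^{a_1})(1-x^{a_2})(1-x^{a_3})},$$
where the numerator, after cancellation against the denominator, is a polynomial with nonnegative coefficients whose monomials $x^m$ record exactly the "new" generators. I expect the numerator to encode the exponents $0$, $\frac{a_1+a_2}{3}$, $\frac{a_1+a_3}{3}$, $\frac{2a_2+a_3}{3}$, $\frac{2a_3+a_2}{3}$ (these are integers precisely because the corresponding $(x_1,x_2,x_3)$ lie in $\mathcal{T}_3$ for $(t_1,t_2,t_3)=(1,2,2)$: indeed $1+2\equiv 0$, $1+2\equiv 0$, $4+2\equiv 0$, $4+2\equiv 0\pmod 3$), together with "completion" terms like $x^{a_1+a_2}$ etc. that make the series a genuine power series with eventually-all-positive coefficients. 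The bookkeeping here — tracking which of the $9$ roots-of-unity cross-terms survive and verifying the numerator is what I claim — is the part requiring care; I would organize it by the congruence class of $x_1 t_1 + x_2 t_2 + x_3 t_3 \bmod 3$ exactly as in the definition of $\mathcal{T}_p$.

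The final step is the passage from the rational function to a system of generators. Once $\textrm{RGF}_3(x)=\sum_{n\ge 0}d(3n;A)x^n$ is known as an explicit rational function, I would argue that $n\in\frac{\langle A\rangle}{3}$ iff $d(3n;A)>0$ iff the coefficient of $x^n$ is positive, and then check two inclusions. For "$\supseteq$": each proposed generator $g$ satisfies $3g\in\langle A\rangle$ by construction (it is $\frac{1}{3}\sum x_i a_i$ with $\sum x_i a_i\equiv 0\pmod 3$ since $p\mid \sum x_i t_i$), so $g\in\frac{\langle A\rangle}{3}$, hence the right-hand side is contained in $\frac{\langle A\rangle}{3}$. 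For "$\subseteq$": given $n$ with $d(3n;A)>0$, write $3n=\sum c_i a_i$ and reduce each $c_i$ modulo $3$; grouping the reduced part $\sum x_i a_i$ with $0\le x_i\le 2$ and the remainder $3\sum\lfloor c_i/3\rfloor a_i/3$... — more precisely, I would show that any solution decomposes so that $n$ is an $\mathbb{N}$-combination of the $a_i$ and the listed fractional generators, using that $\sum x_i t_i\equiv 0\pmod 3$ forces $(x_1,x_2,x_3)\in\mathcal{T}_3\cup\{0\}$ (possibly after absorbing one $a_i$'s worth). The main obstacle I anticipate is not conceptual but combinatorial stamina: correctly carrying out the roots-of-unity average for the specific residue pattern $(1,2,2)$ and confirming that the resulting numerator contains no spurious positive monomials that would force extra generators, and dually that no needed generator is missing — i.e. that the seven-element list in \eqref{e-Aq3} is exactly right and not merely a valid generating set. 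This is also the step where I would cross-check against the general Theorem \ref{generalQuotiA}, whose $\mathcal{T}_3$ for $(t_1,t_2,t_3)=(1,2,2)$ should reproduce precisely these generators.
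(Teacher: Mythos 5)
Your overall strategy is the paper's: represent $\textrm{RGF}_3(x)$ as the constant term \eqref{RGFxCT}, convert it to an explicit rational function with denominator $(1-x^{a_1})(1-x^{a_2})(1-x^{a_3})$, and conclude by checking that every exponent of the (nonnegative) numerator lies in the right-hand side of \eqref{e-Aq3}. Your extraction route --- averaging $\prod_i(1-\lambda^{a_i})^{-1}$ over the three cube roots of $x$ --- is a legitimate variant; it is exactly the multisection the paper itself uses for $p=2$ in the introduction, whereas the paper's proof instead iterates Theorem \ref{LemE}, Theorem \ref{charAS} and Lemma \ref{l-000} to peel off one denominator factor at a time and stay inside $\Q((\lambda))((x))$ without introducing roots of unity. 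One bookkeeping slip in your setup: \eqref{CE} applied directly does \emph{not} reduce the computation to the single term $\mathcal{A}_{1-x\lambda^{-3}}E$. Written in the normal form $1-x^{-1}\lambda^{3}$, that factor is dually contributing, so \eqref{CE} yields the sum of the three terms $\mathcal{A}_{1-\lambda^{a_i}}E$; it is \eqref{DCE} with $E|_{\lambda=0}=0$ that gives $\CT_{\lambda}E=-\mathcal{A}_{1-x^{-1}\lambda^{3}}E$, with a minus sign you drop. Your two sign omissions cancel and the multisection formula you arrive at is correct, but the justification as written is off.

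The genuine gap is that the computation carrying all the content of the proof is never performed. You ``expect'' the numerator to encode $0,\frac{a_1+a_2}{3},\frac{a_1+a_3}{3},\frac{2a_2+a_3}{3},\frac{2a_3+a_2}{3}$ plus completion terms ``like $x^{a_1+a_2}$'', but the actual numerator is
\begin{equation*}
1+x^{\frac{a_1+a_2}{3}}+x^{\frac{a_1+a_3}{3}}+x^{\frac{2a_2+a_3}{3}}+x^{\frac{2a_3+a_2}{3}}
+x^{\frac{2(a_1+a_2)}{3}}+x^{\frac{2(a_1+a_3)}{3}}+x^{\frac{2a_1+a_2+a_3}{3}}+x^{\frac{a_1+2a_2+2a_3}{3}},
\end{equation*}
and the inclusion $\mathrm{LHS}\subseteq\mathrm{RHS}$ rests precisely on producing this polynomial and observing that the last four exponents are sums of the listed generators, e.g.\ $\frac{2a_1+a_2+a_3}{3}=\frac{a_1+a_2}{3}+\frac{a_1+a_3}{3}$ and $\frac{a_1+2a_2+2a_3}{3}=\frac{a_1+a_2}{3}+\frac{2a_3+a_2}{3}$. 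Without that verification nothing is proved. Two further remarks. First, your worry that the seven-element list must be ``exactly right and not merely a valid generating set'' is a red herring: the proposition claims only a system of generators, not a minimal one, so no argument is needed that extra generators are absent. Second, the fallback you sketch at the end --- write $3n=\sum c_ia_i$, reduce each $c_i$ modulo $3$, and note that the reduced triple lies in $\mathcal{T}_3\cup\{(0,0,0)\}$, where for $(t_1,t_2,t_3)=(1,2,2)$ the set $\mathcal{T}_3$ consists of $(1,1,0),(1,0,1),(0,2,1),(0,1,2)$ together with four tuples whose associated generators are sums of these --- is a complete and much shorter proof; it is exactly the argument of Theorem \ref{generalQuotiA} specialized to this case, and had you finished it you would not have needed the generating function at all.
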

\begin{proof}
The right hand side of \eqref{e-Aq3} (RHS) is easily seen to be contained in the left hand side (LHS). To show that {LHS} is contained in RHS, we compute as follows.
By Equation \eqref{RGFxCT}, we have
\begin{align}
\textrm{RGF}_3(x)&=\mathop{\mathrm{CT}}\limits_{\lambda}\frac{1}{(1-\frac{x}{\lambda^{3}})(1-\lambda^{3k_1+1})
(1-\lambda^{3k_2+2})(1-\lambda^{3k_3+2})}\notag
\\&=\mathop{\mathrm{CT}}\limits_{\lambda}\frac{-1}{\underline{(1-\frac{x}{\lambda^{3}})}
(1-\lambda^{3k_1+1})(1-\lambda^{3k_2+2})(1-\lambda^{3k_3+2})}\ \ \text{(by Theorem \ref{LemE})}\notag
\\&=\mathop{\mathrm{CT}}\limits_{\lambda}\frac{-1}{\underline{(1-\frac{x}{\lambda^{3}})}
(1-x^{k_1}\lambda)(1-x^{k_2}\lambda^{2})(1-x^{k_3}\lambda^{2})}\ \ \text{(by Theorem \ref{charAS})}\notag
\\&=\mathop{\mathrm{CT}}\limits_{\lambda}\frac{1}{(1-\frac{x}{\lambda^{3}})
\underline{(1-x^{k_1}\lambda)(1-x^{k_2}\lambda^{2})(1-x^{k_3}\lambda^{2})}}\ \ \text{(by Lemma \ref{l-000})}\notag
\\&=\frac{1}{(1-x^{3k_1+1})(1-x^{k_2-2k_1})(1-x^{k_3-2k_1})}\notag
\\&\ \ +\mathop{\mathrm{CT}}\limits_{\lambda}\frac{1}{\left(1-\frac{x}{\lambda^{3}}\right)
(1-x^{k_1}\lambda)\underline{(1-x^{k_2}\lambda^{2})}(1-x^{k_3}\lambda^{2})}\label{RGFHH3122}
\\&\ \ +\mathop{\mathrm{CT}}\limits_{\lambda}\frac{1}{\left(1-\frac{x}{\lambda^{3}}\right)
(1-x^{k_1}\lambda)(1-x^{k_2}\lambda^{2})\underline{(1-x^{k_3}\lambda^{2})}}.\notag
\end{align}
The second term of \eqref{RGFHH3122} is written as
\begin{align*}
&\mathop{\mathrm{CT}}\limits_{\lambda}\frac{1}{\left(1-\frac{x}{\lambda^{3}}\right)
(1-x^{k_1}\lambda)\underline{(1-x^{k_2}\lambda^{2})}(1-x^{k_3}\lambda^{2})}
\\&=\mathop{\mathrm{CT}}\limits_{\lambda}\frac{1}{\left(1-\frac{x^{k_2+1}}{\lambda}\right)
(1-x^{k_1}\lambda)\underline{(1-x^{k_2}\lambda^{2})}(1-x^{k_3-k_2})}\ \ \text{(by Theorem \ref{charAS})}
\\&=\mathop{\mathrm{CT}}\limits_{\lambda}\frac{-\lambda x^{-k_2-1}}{\left(1-\frac{\lambda}{x^{k_2+1}}\right)
(1-x^{k_1}\lambda)\underline{(1-x^{k_2}\lambda^{2})}(1-x^{k_3-k_2})}
\\&=\mathop{\mathrm{CT}}\limits_{\lambda}\frac{\lambda x^{-k_2-1}}{\underline{\left(1-\frac{\lambda}{x^{k_2+1}}\right)
(1-x^{k_1}\lambda)}(1-x^{k_2}\lambda^{2})(1-x^{k_3-k_2})}\ \ \text{(by Lemma \ref{l-000})}
\\&=\frac{1}{(1-x^{k_1+k_2+1})(1-x^{3k_2+2})(1-x^{k_3-k_2})}+\frac{x^{-k_1-k_2-1}}{(1-x^{-k_1-k_2-1})(1-x^{k_2-2k_1})(1-x^{k_3-k_2})}.
\end{align*}
Similarly, the third term of \eqref{RGFHH3122} is written as
\begin{align*}
&\mathop{\mathrm{CT}}\limits_{\lambda}\frac{1}{\left(1-\frac{x^{k_3+1}}{\lambda}\right)
(1-x^{k_1}\lambda)(1-x^{k_2-k_3})\underline{(1-x^{k_3}\lambda^2)}}\ \ \text{(by Theorem \ref{charAS})}
\\&=\mathop{\mathrm{CT}}\limits_{\lambda}\frac{\lambda x^{-k_3-1}}{\underline{\left(1-\frac{\lambda}{x^{k_3+1}}\right)
(1-x^{k_1}\lambda)}(1-x^{k_2-k_3})(1-x^{k_3}\lambda^2)}\ \ \text{(by Lemma \ref{l-000})}
\\&=\frac{1}{(1-x^{k_1+k_2+1})(1-x^{k_3-k_2})(1-x^{3k_3+2})}+\frac{x^{-k_1-k_3-1}}{(1-x^{-k_1-k_3-1})(1-x^{k_3-k_2})(1-x^{k_3-2k_1})}.
\end{align*}
Therefore, we obtain the representation generating function as follows.
\begin{small}
\begin{align*}
\textrm{RGF}_3(x)&=\frac{1+(x^{k_1+1}+x^{k_2+k_3+2})(x^{k_2}+x^{k_3})+x^{2k_1+2}(x^{2k_2}+x^{2k_3})
+x^{k_1+k_2+k_3}(x^{k_1+2}+x^{k_2+k_3+3})}{(1-x^{3k_1+1})(1-x^{3k_2+2})(1-x^{3k_3+2})}
\\&=\frac{1+x^{\frac{a_1+a_2}{3}}+x^{\frac{a_1+a_3}{3}}+x^{\frac{2a_2+a_3}{3}}+x^{\frac{2a_3+a_2}{3}}
+x^{\frac{2(a_1+a_2)}{3}}+x^{\frac{2(a_1+a_3)}{3}}+x^{\frac{2a_1+a_2+a_3}{3}}
+x^{\frac{a_1+2a_2+2a_3}{3}}}{(1-x^{a_1})(1-x^{a_2})(1-x^{a_3})}.
\end{align*}
\end{small}
By $\frac{2a_1+a_2+a_3}{3}=\frac{a_1+a_2}{3}+\frac{a_1+a_3}{3}$ and $\frac{a_1+2a_2+2a_3}{3}=\frac{a_1+a_2}{3}+\frac{2a_3+a_2}{3}$,
the power of each term in the series expansion of $\textrm{RGF}_3(x)$ is contained in RHS. This completes the proof.
\end{proof}

\begin{prop}\label{HalflineAAAA}
Let $A=(a,a+1)$, $a\geq 3$ and $p=a-1$.
A system of generators of $\frac{\langle a,a+1\rangle}{a-1}$ is given as follows.
\[\frac{\langle a,a+1\rangle}{a-1}=\begin{cases}
\left\langle \dfrac{a+1}{2},\dfrac{a+3}{2},...,a-1,a\right\rangle, &\ \text{if}\ \ a\ \text{is odd}; \\
\left\langle \dfrac{a}{2}+1,\dfrac{a}{2}+2,...,a,a+1\right\rangle, &\ \text{if}\ \ a\ \text{is even}.
\end{cases}\]
\end{prop}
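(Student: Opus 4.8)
The plan is to compute $\textrm{RGF}_{a-1}(x)$ in closed form by the constant-term method, exactly along the lines of Proposition \ref{Example3122}, and then read off a system of generators from the shape of the answer. It is convenient to set $c=\lceil (a+1)/2\rceil$; a direct inspection shows that in both parity cases the claimed generating set equals $\{c,c+1,\dots,2c-1\}$ (if $a$ is odd then $c=\frac{a+1}{2}$ and $2c-1=a$; if $a$ is even then $c=\frac a2+1$ and $2c-1=a+1$). So the target is $\frac{\langle a,a+1\rangle}{a-1}=\langle c,c+1,\dots,2c-1\rangle$.

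By \eqref{RGFxCT},
\[
\textrm{RGF}_{a-1}(x)=\mathop{\mathrm{CT}}\limits_{\lambda}\frac{1}{\left(1-\frac{x}{\lambda^{a-1}}\right)(1-\lambda^{a})(1-\lambda^{a+1})},
\]
and I would follow the first three steps of Proposition \ref{Example3122} verbatim: apply Theorem \ref{LemE} (here $E|_{\lambda=0}=0$ and, after putting the first factor in proper form, the only dually contributing factor is $1-x/\lambda^{a-1}$), then reduce the remaining two factors modulo $\lambda^{a-1}\equiv x$ by Theorem \ref{charAS}, turning $1-\lambda^{a}$ into $1-x\lambda$ and $1-\lambda^{a+1}$ into $1-x\lambda^{2}$, and then apply Lemma \ref{l-000} to $E=\big((1-x/\lambda^{a-1})(1-x\lambda)(1-x\lambda^{2})\big)^{-1}$, which is proper in $\lambda$ and vanishes at $\lambda=0$. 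This gives $\textrm{RGF}_{a-1}(x)=\mathcal{A}_{1-x\lambda}E+\mathcal{A}_{1-x\lambda^{2}}E$. The factor $1-x\lambda$ is linear, so the first term comes from substituting $\lambda=x^{-1}$: $\mathcal{A}_{1-x\lambda}E=\frac{1}{(1-x^{a})(1-x^{-1})}=-\frac{x}{(1-x^{a})(1-x)}$. The factor $1-x\lambda^{2}$ is quadratic, so $\mathcal{A}_{1-x\lambda^{2}}E$ is the average over the two roots $\lambda=\pm x^{-1/2}$ of $\big((1-x/\lambda^{a-1})(1-x\lambda)\big)^{-1}$; carrying this out, and splitting on the parity of $a$ (which controls $(-1)^{a-1}$ and the integrality of $(a\pm1)/2$), the half-integer powers recombine and one finds $\mathcal{A}_{1-x\lambda^{2}}E=\frac{1}{(1-x^{(a+1)/2})(1-x)}$ for $a$ odd and $\mathcal{A}_{1-x\lambda^{2}}E=\frac{1+x^{a/2+1}}{(1-x^{a+1})(1-x)}$ for $a$ even.

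Adding the two contributions, clearing denominators, and cancelling the factor $1-x$ that emerges in the numerator, I expect
\[
\textrm{RGF}_{a-1}(x)=\frac{1+x^{c+1}+x^{c+2}+\cdots+x^{2c-2}}{(1-x^{c})(1-x^{2c-1})}\qquad(a\text{ odd}),
\]
\[
\textrm{RGF}_{a-1}(x)=\frac{1+(x^{c}+\cdots+x^{2c-3})+(x^{2c}+\cdots+x^{3c-3})}{(1-x^{2c-2})(1-x^{2c-1})}\qquad(a\text{ even}),
\]
with the convention that empty exponent ranges contribute nothing (this covers the smallest cases $a=3,4$). Since all coefficients of $\textrm{RGF}_{a-1}(x)$ are nonnegative, no cancellation occurs and its support is precisely the numerical semigroup $\frac{\langle a,a+1\rangle}{a-1}$; from the closed form this support equals $\{\text{numerator exponents}\}+\mathbb{N}\cdot\{\text{denominator exponents}\}$. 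Every numerator exponent is $0$, or a single element of $\{c,\dots,2c-1\}$, or (for the range $x^{2c},\dots,x^{3c-3}$ in the even case) of the form $c+t$ with $t\in\{c,\dots,2c-3\}$, hence a sum of two elements of $\{c,\dots,2c-1\}$; the denominator exponents lie in $\{c,\dots,2c-1\}$ as well, so the support is contained in $\langle c,\dots,2c-1\rangle$. Conversely each of $c,c+1,\dots,2c-1$ occurs among the numerator or denominator exponents and hence lies in the support. As the support is a numerical semigroup, these two inclusions give $\frac{\langle a,a+1\rangle}{a-1}=\langle c,c+1,\dots,2c-1\rangle$, i.e. the stated presentation in both parities.

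The step I expect to be the main obstacle is the quadratic denominator factor $1-x\lambda^{2}$: none of the factors in the worked example are of this type, and handling $\mathcal{A}_{1-x\lambda^{2}}$ forces an average over the non-rational substitution $\lambda=\pm x^{-1/2}$, after which the two parity cases genuinely diverge (one has $(-1)^{a-1}=\pm1$, and $(a+1)/2$ is or is not an integer). Getting the fractional powers of $x$ to cancel and then factoring $1-x$ out of the resulting numerator so that the denominator collapses to the expected $(1-x^{c})(1-x^{2c-1})$, resp.\ $(1-x^{2c-2})(1-x^{2c-1})$, is the delicate part; the remaining semigroup bookkeeping is routine. (One could also bypass generating functions entirely: writing $m=qa+r$ with $0\le r<a$ one has $m\in\langle a,a+1\rangle\iff r\le q$, and a short computation then gives $\frac{\langle a,a+1\rangle}{a-1}=\{0\}\cup\{n:n\ge c\}=\langle c,\dots,2c-1\rangle$; but the generating-function route is the one that illustrates the method of this section.)
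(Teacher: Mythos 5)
Your proposal is correct and follows the paper's route almost exactly: the same constant-term setup \eqref{RGFxCT}, the same first three reductions (Theorem \ref{LemE}, Theorem \ref{charAS}, Lemma \ref{l-000}), the same two closed forms for $\textrm{RGF}_{a-1}(x)$, and the same reading-off of generators from numerator and denominator exponents. The one place you genuinely diverge is the step you yourself flag as the obstacle, the quadratic factor $1-x\lambda^{2}$. You compute $\mathcal{A}_{1-x\lambda^{2}}E$ by averaging the cofactor over the two roots $\lambda=\pm x^{-1/2}$; this is legitimate (the constant term of the degree-$<2$ numerator $A_s(\lambda)$ is the mean of its values at the two roots, since they sum to zero), and your two parity-dependent answers $\frac{1}{(1-x^{(a+1)/2})(1-x)}$ and $\frac{1+x^{a/2+1}}{(1-x^{a+1})(1-x)}$ do agree with what the paper obtains. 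The paper, however, never leaves the rational-function framework: it applies Theorem \ref{charAS} again to reduce the \emph{other} denominator factor modulo $\langle 1-x\lambda^{2}\rangle$, so that $1-x/\lambda^{a-1}$ becomes $1-x^{(a+1)/2}$ (free of $\lambda$) when $a$ is odd, respectively $1-x^{a/2}/\lambda$ when $a$ is even, after which the remaining constant term falls out of \eqref{DCE}, resp.\ one more application of Lemma \ref{l-000} to a product of linear factors. The modular-reduction route mechanizes better (no fractional powers to cancel by hand, and it scales to factors of higher degree in $\lambda$), while your roots-of-unity average is more self-contained but makes the parity split appear earlier and more delicately. Both yield the same generating functions, and your final support argument (plus the elementary aside via $m=qa+r$) is sound.
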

\begin{proof}
Similarly, we only show that LHS is contained in RHS.
By Equation \eqref{RGFxCT}, we have
\begin{align*}
\textrm{RGF}_{a-1}(x)&=\mathop{\mathrm{CT}}\limits_{\lambda}\frac{1}{(1-\frac{x}{\lambda^{p}})(1-\lambda^{a})
(1-\lambda^{a+1})}
\\&=\mathop{\mathrm{CT}}\limits_{\lambda}\frac{-1}{\underline{(1-\frac{x}{\lambda^{a-1}})}
(1-\lambda^{a})(1-\lambda^{a+1})}\ \text{(by Theorem \ref{LemE})}
\\&=\mathop{\mathrm{CT}}\limits_{\lambda}\frac{-1}{\underline{(1-\frac{x}{\lambda^{a-1}})}
(1-x\lambda)(1-x\lambda^{2})}\ \text{(by Theorem \ref{charAS})}
\\&=\mathop{\mathrm{CT}}\limits_{\lambda}\frac{1}{(1-\frac{x}{\lambda^{a-1}})
\underline{(1-x\lambda)(1-x\lambda^2)}}\ \text{(by Lemma \ref{l-000})}
\\&=\frac{1}{(1-x^{a})(1-x^{-1})}
+\mathop{\mathrm{CT}}\limits_{\lambda}\frac{1}{\left(1-\frac{x}{\lambda^{a-1}}\right)
(1-x\lambda)\underline{(1-x\lambda^{2})}}.
\end{align*}
For the second term, we compute according to the parity of $a$. If $a$ is odd, then we have
\begin{align*}
&\mathop{\mathrm{CT}}\limits_{\lambda}\frac{1}{\left(1-\frac{x}{\lambda^{a-1}}\right)
(1-x\lambda)\underline{(1-x\lambda^{2})}}
=\mathop{\mathrm{CT}}\limits_{\lambda}\frac{1}{\left(1-x^{\frac{a+1}{2}}\right)
(1-x\lambda)\underline{(1-x\lambda^{2})}}
\\=&\frac{1}{\left(1-x^{\frac{a+1}{2}}\right)}\left(1-\mathop{\mathrm{CT}}\limits_{\lambda}
\frac{1}{\underline{(1-x\lambda)}(1-x\lambda^2)}\right)
=\frac{1}{\left(1-x^{\frac{a+1}{2}}\right)}\left(1-\frac{1}{1-x^{-1}}\right)
\\=&\frac{1}{(1-x)(1-x^{\frac{a+1}{2}})}.
\end{align*}
Thus, we obtain
\begin{align*}
\textrm{RGF}_{a-1}(x)&=\frac{1-x-x^a+x^{\frac{a+3}{2}}}{(1-x)(1-x^a)\left(1-x^{\frac{a+1}{2}}\right)}
=\frac{1+x^{\frac{a+3}{2}}+x^{\frac{a+5}{2}}+\cdots +x^{a-1}}{(1-x^a)\left(1-x^{\frac{a+1}{2}}\right)};
\end{align*}
If $a$ is even, then we have
\begin{align*}
&\mathop{\mathrm{CT}}\limits_{\lambda}\frac{1}{\left(1-\frac{x}{\lambda^{a-1}}\right)
(1-x\lambda)\underline{(1-x\lambda^{2})}}
=\mathop{\mathrm{CT}}\limits_{\lambda}\frac{1}{\left(1-\frac{x^{\frac{a}{2}}}{\lambda}\right)
(1-x\lambda)\underline{(1-x\lambda^{2})}}
\\=&\mathop{\mathrm{CT}}\limits_{\lambda}\frac{-\frac{\lambda}{x^{a/2}}}
{(1-\frac{\lambda}{x^{a/2}})(1-x\lambda)\underline{(1-x\lambda^2)}}
=\mathop{\mathrm{CT}}\limits_{\lambda}\frac{\frac{\lambda}{x^{a/2}}}
{\underline{(1-\frac{\lambda}{x^{a/2}})(1-x\lambda)}(1-x\lambda^2)}
\\=&\frac{1}{(1-x^{\frac{a+2}{2}})(1-x^{a+1})}+\frac{x}{(1-x^{\frac{a+2}{2}})(1-x)}.
\end{align*}
Thus, we obtain
\begin{align*}
\textrm{RGF}_{a-1}(x)&=\frac{-x}{(1-x)(1-x^{a})}+\frac{1}{(1-x^{\frac{a+2}{2}})(1-x^{a+1})}
+\frac{x}{(1-x^{\frac{a+2}{2}})(1-x)}
\\&=\frac{1-x+(x^{\frac{a+2}{2}}+x^{a+2})(1-x^{\frac{a}{2}-1})}{(1-x)(1-x^a)(1-x^{a+1})}
\\&=\frac{1+(x^{\frac{a+2}{2}}+x^{a+2})(1+x+x^2+\cdots +x^{\frac{a}{2}-2})}{(1-x^a)(1-x^{a+1})}.
\end{align*}
The proposition then follows.
\end{proof}

Note that $\frac{\langle a,a+1\rangle}{a-1}$ is a half-line numerical semigroup (see \cite{M.Bras-Amoros04}). Therefore its Frobenius number is given by
\[F\left(\frac{\langle a,a+1\rangle}{a-1}\right)=\begin{cases}
\dfrac{a-1}{2}, &\ \text{if}\ \ a\ \text{is odd}; \\
\dfrac{a}{2}, &\ \text{if}\ \ a\ \text{is even}.
\end{cases}\]

\section{A System of Generators of $\frac{\langle A\rangle}{p}$}
Let $A=(a_1,a_2)=(pk_1+t_1,pk_2+t_2)$, $0\leq t_i\leq p-1$, $p,k_1,k_2\in\mathbb{Z}^{+}$ and $\gcd(A)=1$. We can compute $\textrm{RGF}_p(x)$ for $p=2,3,4,5$ in a similar way to that in Proposition \ref{Example3122}. Readers are invited to try to compute for themselves. Furthermore, we can obtain a system of generators of $\frac{\langle a_1,a_2\rangle}{p}$. The results, not given here, agree with known results in \cite[Proposition 17]{Rosales2006}.

For $A=(a_1,a_2,a_3)$, we compute similarly to obtain $\textrm{RGF}_{p}(x)$ for $p=2,3$. The corresponding systems of generators are given in Table \ref{tab-GOr}.
This table illustrates a pattern, which we summarize as Theorem \ref{generalQuotiA}. It turns out that the theorem has a simple direct proof as follows.

\begin{proof}[Proof of Theorem \ref{generalQuotiA}]
Let $\overline{B}:=\left\langle a_1,a_2,\ldots,a_n,\frac{1}{p}\sum_{i=1}^nx_ia_i\ \Big|\  (x_1,x_2,\ldots,x_n)\in \mathcal{T}_p\right\rangle$. This is well-defined since $(x_1a_1+x_2a_2+\cdots+x_na_n)/p\in \Z^+$ by definition of $\mathcal{T}_p$. Let $\overline{A}:=\frac{\langle A\rangle}{p}$. Then
the containment $\overline{A}\supseteq \overline{B}$ is obvious, and we need to show that $\overline{A}\subseteq \overline{B}$.

If $x\in \overline{A}=\frac{\langle A\rangle}{p}$, then $xp=y_1a_1+y_2a_2+\cdots+y_na_n$ for some $y_1,y_2,\ldots,y_n\in \mathbb{N}$.
Each $y_i$ is uniquely written as $y_i=m_ip+r_i$ for some $m_i\geq 0$ and $0\leq r_i\leq p-1$. Then we have
$x=m_1a_1+m_2a_2+\cdots+m_na_n+(r_1a_1+r_2a_2+\cdots+r_na_n)/p$ and $p\mid (r_1a_1+r_2a_2+\cdots+r_na_n)$.
Now $(r_1a_1+r_2a_2+\cdots+r_na_n)/p$ is either $0$ or an element in $\left\{(x_1a_1+x_2a_2+\cdots+x_na_n)/p\ |\ (x_1,x_2,...,x_n)\in \mathcal{T}_p\right\}$ by the definition of $\mathcal{T}_p$. In either case, we have $x\in \overline{B}$. Therefore $\overline{A}\subseteq \overline{B}$.
\end{proof}

\begin{cor}
Suppose that $A=(a_1,a_2,a_3)=(pk_1+t_1,pk_2+t_2,pk_3+t_3)$ with $p\in\mathbb{Z}^{+}$, $k_i\in \mathbb{N}$, $1\leq t_i\leq p-1$ for $1\leq i\leq 3$. If $\gcd(A)=1$,
then a system of generators of the quotient of the numerical semigroup $\langle A\rangle$ by $p$ is given by
$$\frac{\langle A\rangle}{p}=\left\langle a_1,a_2,a_3,\frac{1}{p}(x_1a_1+x_2a_2+x_3a_3)\ \Big|\  (x_1,x_2,x_3)\in \mathcal{T}_p\right\rangle,$$
where
$$\mathcal{T}_p=\{(x_1,x_2,x_3) \mid 0\leq x_1,x_2,x_3\leq p-1,\ \ p\mid (t_1x_1+t_2x_2+t_3x_3) ( \neq 0)\}.$$
\end{cor}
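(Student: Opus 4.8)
The plan is simply to observe that this corollary is the special case $n=3$ of Theorem~\ref{generalQuotiA}, so no new work is required; the only thing to do is match up the notation. First I would check the hypotheses: the corollary asks for $p\in\mathbb{Z}^{+}$, $k_i\in\mathbb{N}$, $1\leq t_i\leq p-1$ for $1\leq i\leq 3$, and $\gcd(A)=1$, which is precisely the hypothesis list of Theorem~\ref{generalQuotiA} with $n=3$ (and $3\geq 2$, so the standing assumption $n\geq 2$ holds). Next I would check that the two sets called $\mathcal{T}_p$ coincide: expanding $\sum_{i=1}^{3}x_it_i=t_1x_1+t_2x_2+t_3x_3$ shows that the constraint ``$0\leq x_1,x_2,x_3\leq p-1$ and $p\mid(t_1x_1+t_2x_2+t_3x_3)(\neq 0)$'' in the corollary is the same, word for word, as ``$0\leq x_1,x_2,x_3\leq p-1$ and $p\mid\sum_{i=1}^{3}x_it_i\,(\neq 0)$'' obtained from the theorem. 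With this dictionary, the claimed system of generators $\langle a_1,a_2,a_3,\frac{1}{p}(x_1a_1+x_2a_2+x_3a_3)\mid(x_1,x_2,x_3)\in\mathcal{T}_p\rangle$ is literally the $n=3$ instance of the set appearing in Theorem~\ref{generalQuotiA}, and the asserted equality with $\frac{\langle A\rangle}{p}$ is exactly that theorem's conclusion. Hence the corollary follows at once.

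If one prefers a self-contained argument, one can repeat the short proof of Theorem~\ref{generalQuotiA} verbatim with $n$ replaced by $3$: for $x\in\frac{\langle A\rangle}{p}$ write $xp=y_1a_1+y_2a_2+y_3a_3$ with $y_i\in\mathbb{N}$, split $y_i=m_ip+r_i$ with $0\leq r_i\leq p-1$, and note that $(r_1a_1+r_2a_2+r_3a_3)/p$ is either $0$ or an element of $\{\frac{1}{p}(x_1a_1+x_2a_2+x_3a_3)\mid(x_1,x_2,x_3)\in\mathcal{T}_p\}$; either way $x$ lies in the right-hand side, while the reverse containment is immediate. There is essentially no obstacle here; the only point worth a moment's thought is that $\frac{1}{p}(x_1a_1+x_2a_2+x_3a_3)$ is a positive integer whenever $(x_1,x_2,x_3)\in\mathcal{T}_p$, which is built into the definition of $\mathcal{T}_p$, since $x_1a_1+x_2a_2+x_3a_3=p\sum_{i}x_ik_i+\sum_{i}x_it_i$ is divisible by $p$ and strictly positive because $\sum_i x_it_i\neq 0$.
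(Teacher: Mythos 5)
Your proposal is correct and matches the paper exactly: the corollary is stated there without a separate proof, precisely because it is the $n=3$ instance of Theorem~\ref{generalQuotiA}, and your notational check (that the two definitions of $\mathcal{T}_p$ coincide and that $\frac{1}{p}\sum_i x_ia_i$ is a positive integer on $\mathcal{T}_p$) is all that is needed. Nothing further is required.
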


We observe that Theorem \ref{generalQuotiA} can be strengthened in the following sense.
Suppose that $A=(a_1,...,a_e,a_{e+1},...,a_n)$, $p\mid a_i$ for $1\leq i\leq e$ and $p\nmid a_j$ for $e+1\leq j\leq n$. Then we have
\begin{align}
\frac{\langle A\rangle}{p}=\left\langle \frac{a_1}{p},\frac{a_2}{p},...,\frac{a_e}{p},\mathcal{L}_p\right\rangle,\label{CharacterT=0}
\end{align}
where $\mathcal{L}_p$ is a system of generators of $\langle a_{e+1},...,a_n\rangle/p$. We only explain {that} LHS is contained in RHS, for the other containment
is trivial. For any $x\in \frac{\langle A\rangle}{p}$, there exists $xp=y_1a_1+y_2a_2+\cdots+y_na_n$ for some $y_1,y_2,\ldots,y_n\in \mathbb{N}$. We have
$x=y_1\frac{a_1}{p}+\cdots +y_e\frac{a_e}{p}+\frac{1}{p}(y_{e+1}a_{e+1}+\cdots +y_na_n)$. Therefore $x\in \left\langle \frac{a_1}{p},\frac{a_2}{p},...,\frac{a_e}{p},\mathcal{L}_p\right\rangle$.

\begin{rem}
Theorem \ref{generalQuotiA} only gives a system of generators of $\frac{\langle A\rangle}{p}$, rather than a system of minimal generators. In \cite{E.Cabanillas}, the author discussed the minimal generators of $\frac{\langle a_1,a_2\rangle}{p}$. In \cite{Rosales2006}, the authors gave a characterization of a system of generators of $\frac{\langle a_1,a_2\rangle}{p}$. In \cite{Alessio19}, the author also gave a depiction of a system of generators of $\frac{\langle A\rangle}{p}$.
\end{rem}

Combining Equation \eqref{CharacterT=0} and Theorem \ref{generalQuotiA}, we reobtain the following result.
\begin{cor}[Corollary 18, \cite{Rosales2006}]
Let $a_1,a_2,k_1,k_2\in \mathbb{Z}^{+}$ and $\gcd(a_1,a_2)=1$. If $a_1=2k_1$, $a_2=2k_2+1$, then
$$\frac{\langle a_1,a_2\rangle}{2}=\left\langle \frac{a_1}{2},a_2\right\rangle.$$
If $a_1=2k_1+1$, $a_2=2k_2+1$, then
$$\frac{\langle a_1,a_2\rangle}{2}=\left\langle a_1,a_2,\frac{a_1+a_2}{2}\right\rangle.$$
\end{cor}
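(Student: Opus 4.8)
The plan is to obtain both assertions as immediate consequences of the two results just established: Theorem~\ref{generalQuotiA}, which covers the case where no generator is divisible by $p$, and Equation~\eqref{CharacterT=0}, which reduces the general case to that one by peeling off the generators that \emph{are} divisible by $p$. Here $p=2$ and the dichotomy in the statement is exactly the dichotomy "some $a_i$ even" versus "all $a_i$ odd", so each case matches one of the two tools precisely.

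First I would treat the case $a_1=2k_1$, $a_2=2k_2+1$. Since $2\mid a_1$ and $2\nmid a_2$, Equation~\eqref{CharacterT=0} applies with $e=1$ and $n=2$, giving $\frac{\langle a_1,a_2\rangle}{2}=\langle \frac{a_1}{2},\mathcal{L}_2\rangle$, where $\mathcal{L}_2$ is a system of generators of $\langle a_2\rangle/2$. Because $a_2$ is odd, $2n\in\langle a_2\rangle=a_2\N$ forces $a_2\mid n$, so $\langle a_2\rangle/2=a_2\N$ and one may take $\mathcal{L}_2=\{a_2\}$. This yields $\frac{\langle a_1,a_2\rangle}{2}=\langle \frac{a_1}{2},a_2\rangle$; note $\gcd(\frac{a_1}{2},a_2)=1$ since $\gcd(a_1,a_2)=1$, so the right-hand side is indeed a numerical semigroup.

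Next I would treat the case $a_1=2k_1+1$, $a_2=2k_2+1$. Now $t_1=t_2=1$, so Theorem~\ref{generalQuotiA} applies directly with $n=2$ and $p=2$, and it remains only to compute $\mathcal{T}_2=\{(x_1,x_2)\mid 0\le x_1,x_2\le 1,\ 2\mid (x_1+x_2)\ (\neq 0)\}$. Among the four pairs in $\{0,1\}^2$, only $(1,1)$ has $x_1+x_2$ a nonzero multiple of $2$, so $\mathcal{T}_2=\{(1,1)\}$ and the extra generator it contributes is $\frac{1}{2}(a_1+a_2)$. Theorem~\ref{generalQuotiA} then gives $\frac{\langle a_1,a_2\rangle}{2}=\langle a_1,a_2,\frac{a_1+a_2}{2}\rangle$, as claimed.

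There is no real obstacle here: the corollary is a specialization of results already proved, and each case is settled by a one-line verification of hypotheses plus (in the second case) the trivial enumeration of $\mathcal{T}_2$. The only point warranting a moment of care is the degenerate instance of "$\langle a_2\rangle/2$" appearing in the first case, where $\langle a_2\rangle$ is a single-generated sub-semigroup of $\N$ rather than a (cofinite) numerical semigroup; but its quotient by $2$ is still unambiguously $a_2\N$, which is all that Equation~\eqref{CharacterT=0} requires.
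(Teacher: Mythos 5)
Your proof is correct and follows exactly the route the paper intends: it states that the corollary is obtained by ``combining Equation~\eqref{CharacterT=0} and Theorem~\ref{generalQuotiA},'' which is precisely your case split (Equation~\eqref{CharacterT=0} plus the observation $\langle a_2\rangle/2=a_2\mathbb{N}$ when $a_1$ is even; Theorem~\ref{generalQuotiA} with $\mathcal{T}_2=\{(1,1)\}$ when both are odd). Your write-up is in fact more detailed than the paper's one-line justification, and the details check out.
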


Another consequence of Theorem \ref{generalQuotiA} is the following.
\begin{cor}[Corollary 19, \cite{Rosales2006}]
Let $a_1,a_2,k_1,k_2\in \mathbb{Z}^{+}$ and $\gcd(a_1,a_2)=1$.
If $a_1=3k_1+1$, $a_2=3k_2+1$, (or $a_1=3k_1+2$, $a_2=3k_2+2$,) then
$$\frac{\langle a_1,a_2\rangle}{3}=\left\langle a_1,a_2,\frac{2a_1+a_2}{3},\frac{2a_2+a_1}{3}\right\rangle.$$
If $a_1=3k_1+1$, $a_2=3k_2+2$, then
$$\frac{\langle a_1,a_2\rangle}{3}=\left\langle a_1,a_2,\frac{a_1+a_2}{3}\right\rangle.$$
\end{cor}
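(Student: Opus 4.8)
The plan is to obtain this corollary as an immediate specialization of Theorem \ref{generalQuotiA} to $n = 2$ and $p = 3$. In each of the three cases the hypothesis $\gcd(a_1,a_2) = 1$ forces $\gcd(A) = 1$, and the residues satisfy $1 \le t_i \le 2$, so Theorem \ref{generalQuotiA} applies and reduces the claim to writing down the finite set $\mathcal{T}_3 = \{(x_1,x_2) : 0 \le x_1, x_2 \le 2,\ 3 \mid (t_1 x_1 + t_2 x_2) \ne 0\}$ and reading off the corresponding new generators $\frac{1}{3}(x_1 a_1 + x_2 a_2)$.

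For $a_1 = 3k_1 + 1$, $a_2 = 3k_2 + 1$ (so $t_1 = t_2 = 1$), I would run through the nine pairs $(x_1, x_2)$ and find $\mathcal{T}_3 = \{(1,2), (2,1)\}$, producing the new generators $\frac{a_1 + 2a_2}{3} = \frac{2a_2 + a_1}{3}$ and $\frac{2a_1 + a_2}{3}$, which is exactly the asserted system. For $a_1 = 3k_1 + 2$, $a_2 = 3k_2 + 2$ (so $t_1 = t_2 = 2$), the condition $3 \mid 2(x_1 + x_2) \ne 0$ is equivalent, since $\gcd(2,3) = 1$, to $3 \mid (x_1 + x_2) \ne 0$, so again $\mathcal{T}_3 = \{(1,2),(2,1)\}$ and the same two generators occur. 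For $a_1 = 3k_1 + 1$, $a_2 = 3k_2 + 2$ (so $t_1 = 1$, $t_2 = 2$), I would check that $3 \mid (x_1 + 2x_2) \ne 0$ holds precisely for $(x_1, x_2) \in \{(1,1), (2,2)\}$, so $\mathcal{T}_3 = \{(1,1),(2,2)\}$; the two candidate generators are then $\frac{a_1 + a_2}{3}$ and $\frac{2(a_1 + a_2)}{3}$, and since the latter is twice the former it can be dropped, leaving $\frac{\langle a_1, a_2 \rangle}{3} = \langle a_1, a_2, \frac{a_1 + a_2}{3} \rangle$.

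There is essentially no real obstacle here: each step is a finite enumeration over at most nine pairs. The only points needing a moment's care are the coprimality observation that collapses the $t_1 = t_2 = 2$ computation onto the $t_1 = t_2 = 1$ one, and the remark that one of the two generators produced in the mixed case is a multiple of the other and hence redundant. If one preferred to avoid invoking Theorem \ref{generalQuotiA}, each identity could instead be recovered by computing $\textrm{RGF}_3(x)$ along the lines of Proposition \ref{Example3122} and inspecting the exponents appearing in its numerator, but the route through Theorem \ref{generalQuotiA} is considerably shorter.
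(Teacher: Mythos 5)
Your proposal is correct and follows the paper's own proof exactly: the paper likewise applies Theorem \ref{generalQuotiA} and records that $\mathcal{T}_3=\{(1,2),(2,1)\}$ when $t_1=t_2$ and $\mathcal{T}_3=\{(1,1),(2,2)\}$ in the mixed case. Your extra remark that $\frac{2(a_1+a_2)}{3}$ is redundant, being twice $\frac{a_1+a_2}{3}$, is a small point the paper leaves implicit but is needed to match the stated generating set.
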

\begin{proof}
Apply Theorem \ref{generalQuotiA}. If $(t_1,t_2)=(1,1)$ or $(t_1,t_2)=(2,2)$, then we have $\mathcal{T}_p=\{(1,2),(2,1)\}$. If $(t_1,t_2)=(1,2)$, then we have $\mathcal{T}_p=\{(1,1),(2,2)\}$. This completes the proof.
\end{proof}

\section{Future works}
Let $s\in\mathbb{N}$, $A=(a_1,a_2,...,a_k)$ and $\gcd(A)=1$. T. Komatsu \cite{T.Komatsu} 
introduced \emph{$s$-numerical semigroup} defined as $\langle A;s\rangle=\{n\in \mathbb{N} \mid d(n;a_1,a_2,...,a_k)\geq s+1\} \cup \{0\}$, and 
considered its Frobenius number, which is also called the \emph{$s$-Frobenius number} $F_s(A)$ of $A$. 
In other words, $F_s(A)$ is the largest number $N$ satisfying $d(N;a_1,\dots,a_k)\leq s$. These concepts reduce to the classical one when $s=0$,
and is very hard to compute. These new concepts maybe useful to deepen the idea of this paper.

\noindent
{\small \textbf{Acknowledgements:}
The author would like to thank the referee for helpful comments and suggestions. The author thanks his advisor Guoce Xin for guidance and support. This work is partially supported by the National Natural Science Foundation of China [12071311].

\end{document}